\numberwithin{equation}{section}
\newtheorem{theorem}{\textbf{Theorem}}[section]
\newtheorem{proposition}[theorem]{\textbf{Proposition}}
\newtheorem{lemma}[theorem]{\textbf{Lemma}}
\theoremstyle{definition}
\theoremstyle{remark}
\DeclareMathOperator{\dist}{dist}
\newcommand{\R}{\mathbb{R}}
\newcommand{\N}{\mathbb{N}}
\renewcommand{\(}{\left(}
\renewcommand{\)}{\right)}
\newcommand{\intd}{\ \mathrm{d}}
\begin{document}
	
	\title{A priori estimates for anti-symmetric solutions to a fractional Laplacian equation in a bounded domain}
	\date{}
	
	\author{ \textsc{Chenkai Liu}, \textsc{Shaodong Wang}\footnote{Partially supported by NSFC-12001364.} \textsc{and Ran Zhuo}}

    \maketitle

\begin{abstract}
In this paper, we obtain a priori estimates for the set of anti-symmetric solutions to a fractional Laplacian equation in a bounded domain using a blowing-up and rescaling argument. In order to establish a contradiction to possible blow-ups, we apply a certain variation of the moving planes method in order to prove a monotonicity result for the limit equation after rescaling.
\end{abstract}

\noindent\textbf{Keywords:} A priori estimates, Fractional Laplacian, Anti-symmetric solutions, Moving planes.

\noindent\textbf{Mathematics Subject Classification 2020:}  35B45, 35J60, 35B40.

\section{Introduction}
The fractional Laplacian in $\mathbb{R}^n$, $n\geqslant 2$, is a non-local pseudo-differential operator of the form:
\begin{equation}\label{eq:fl}
	\begin{aligned}
		(-\Delta)^{\frac{\alpha}{2}} u(x)&=C_{n,\alpha}P.V.\int_{\mathbb{R}^n}\frac{u(x)-u(y)}{|x-y|^{n+\alpha}}\intd y\\
		&=C_{n,\alpha}\lim_{\epsilon\rightarrow0^+}\int_{\mathbb{R}^n\backslash B_{\epsilon}(x)}\frac{u(x)-u(y)}{|x-y|^{n+\alpha}}\intd y,
	\end{aligned}
\end{equation}
where $0<\alpha<2$, $C_{n,\alpha}$ is a positive constant depending only on $\alpha$ and $n$, and $P.V.$ stands for the Cauchy principle value. In order for the integral to make sense, we require $u\in \mathcal{L}^{\alpha}\cap C^{1,1}_{\mathrm{loc}}(\mathbb{R}^n)$ where
\begin{equation*}
	\mathcal{L}^{\alpha}(\mathbb{R}^n)=\{u\in L_{\mathrm{loc}}^1(\mathbb{R}^n) | \int_{\mathbb{R}^n}\frac{|u(x)|}{1+|x|^{n+\alpha}}\intd x<+\infty\}.
\end{equation*}

This non-local operator has been wildly investigated in the past few decades because of its applications in many fields of physical sciences. In particular, it appears in anomalous diffusion, quasi-geostrophic flows, turbulence and water waves,
molecular dynamics, and relativistic quantum mechanics of stars (see for example \cite{bou-geo}, \cite{caf-vas}, \cite{con}, \cite{tar-zas}). It
also has close connections to probability and finance (\cite{app}, \cite{ber}).

In this paper, we study a priori estimates for the set of anti-symmetric solutions to the following fractional Laplacian equations~\eqref{eq:main} in a bounded domain $\Omega$ with $C^2$-boundary:
\begin{align}\label{eq:main}
\begin{cases}
(-\Delta)^{\frac{\alpha}{2}}u(x)=u^p(x), \: u>0 &\text{in}\:\Omega^+,
\\
u(\bar{x},x_n)=-u(\bar{x},-x_n)&\text{in}\: \Omega,\\
u(x)=0 &\text{in}\: \R^n\setminus \Omega,
\end{cases}
\end{align}
where $1<p<\frac{n+\alpha}{n-\alpha}$, $\Omega^+:=\{x\in \Omega : x_n>0\}$ is defined to be the upper half of $\Omega$, $\R^n=\{(\bar{x},x_n), \bar{x}\in \R^{n-1}\}$ is the Euclidean space, and $\Omega$ is  symmetric about the hyperplane $\{x\in \R^n:x_n=0\}$.

A priori estimates play important roles in the study of partial differential equations. In particular, it has proven to be quite useful in establishing the existence of solutions.  Fruitful results as well as useful applications were obtained on the subject. Gidas and Spruck in (\cite{gid-spr}) first established the classical scaling and blowing-up method to derive a priori estimate for some non-linear elliptic equations on a bounded domain in $\mathbb{R}^n$. By adapting the above classical scaling method, Berestycki, Capuzzo-Dolcetta and Nirenberg in  (\cite{ber-dol-nir}) obtained a priori estimate for some elliptic boundary value problem. Moreover, based on the a priori estimates, they proved the existence of solutions in the subcritical case via the Leray-Schauder degree theory. As for the problem with non-local Laplacian operators, Chen, Li and Li in (\cite{che-li-li}) established a direct scaling argument in order to study some fractional Laplacian equations in Euclidean domains. Combining the direct scaling argument with corresponding Liouville theorems, they derived a priori estimates of solutions to certain non-linear fractional Laplace equations. For more historical results on the study of a priori estimates of solutions to non-linear partial differential equations, we refer the reader to \cite{che-li-1}, \cite{che-li-2}, \cite{kri}, \cite{zhu-ly} and the references therein.

In \cite{bar-pez-gar}, Barrios, Del~Pezzo, Garc\'{\i}a-Meli\'{a}n, and
Quaas studied the following non-local elliptic problem on a bounded domain $\tilde{\Omega}$ in $\mathbb{R}^n$:
\begin{align}\label{eq:fl:K}
\begin{cases}
(-\Delta)^{\frac{\alpha}{2}}_K u(x)=u^p(x)+g(x,u), u>0 &\text{in}\:\tilde{\Omega},
\\
u(x)=0 &\text{in}\: \R^n\setminus \tilde{\Omega},
\end{cases}
\end{align}
where
$$(-\Delta)^{\frac{\alpha}{2}}_K u(x)=\frac{C_{n,\alpha}}{2}\int_{\mathbb{R}^n}\frac{2u(x)-u(x+y)-u(x-y)}{|y|^{n+\alpha}} K(y)\intd y,$$
and
$$0<\lambda<K(x)<\Lambda,\,\,\,\,\mbox{in }\tilde{\Omega}, $$
where $\lambda< \Lambda$ are positive constants, $p>1$ and $g$ is a perturbation term which is small in some sense. Note that in the case $K=1$, $(-\Delta)^{\frac{\alpha}{2}}_K$ is reduced to the fractional Laplacian operator~\eqref{eq:fl}. By adapting the
classical scaling method, they obtained the a priori bounds of positive solutions for \eqref{eq:fl:K}. Combining with topological degree, they also derived the existence of solutions. Independently, Chen, Li and Li in \cite{che-li-li} studied the particular case $K=1$ and $g=0$, where  \eqref{eq:fl:K} reduces to the following fractional Laplacian equation:
\begin{align}\label{eq:fl:1}
\begin{cases}
(-\Delta)^{\frac{\alpha}{2}} u(x)=u^p(x), u>0 &\text{in}\:\tilde{\Omega},
\\
u(x)=0 &\text{in}\: \R^n\setminus \tilde{\Omega},
\end{cases}
\end{align}
where $\tilde{\Omega}$ is a bounded domain and $1<p<\frac{n+\alpha}{n-\alpha}$. They carried out the blowing-up and rescaling arguments on the non-local problem directly to obtain a
priori estimates on the solutions of \eqref{eq:fl:1}. 

While there is a large literature dealing with positive solutions to fractional Laplacian equations on domains, very little is known about the anti-symmetric solutions. In this paper, we will apply the direct blowing-up argument similarly as in \cite{che-li-li} to establish a priori estimates on the set of anti-symmetric solutions to equation~\eqref{eq:main}. Our main result is the following theorem:

\begin{theorem}\label{thm: apriori}
	Assume that $u\in \mathcal{L}^\alpha\cap C_{\mathrm{loc}}^{1,1}(\Omega)$ is an anti-symmetric solution to \eqref{eq:main}. Then $$\|u\|_{L^{\infty}(\Omega)}\leqslant C,$$
	for some positive constant $C$ independent of $u$.
\end{theorem}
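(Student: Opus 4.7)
The plan is to adapt the direct blow-up and rescaling scheme of Chen--Li--Li \cite{che-li-li} to the antisymmetric setting. I would argue by contradiction: suppose there is a sequence of antisymmetric solutions $\{u_k\}$ to \eqref{eq:main} with $M_k := \|u_k\|_{L^\infty(\Omega)} \to \infty$. By antisymmetry and positivity in $\Omega^+$, fix $x_k \in \Omega^+$ with $u_k(x_k) = M_k$. Choose the natural scale $\lambda_k := M_k^{-(p-1)/\alpha} \to 0$ and set $v_k(y) := M_k^{-1} u_k(x_k + \lambda_k y)$. The scaling property of $(-\Delta)^{\alpha/2}$ gives $(-\Delta)^{\alpha/2} v_k = v_k^p$ on the rescaled set $\Omega_k^+ := \lambda_k^{-1}(\Omega^+ - x_k)$, with $v_k(0) = 1$, $0 < v_k \leqslant 1$ in $\Omega_k^+$, and $v_k$ antisymmetric about the hyperplane $T_k := \{y_n = -x_{k,n}/\lambda_k\}$.

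\textbf{Limit equations and easy cases.} Introduce $d_k^\partial := \dist(x_k, \partial\Omega)/\lambda_k$ and $d_k^0 := x_{k,n}/\lambda_k$. A preliminary lemma should show $\liminf d_k^\partial > 0$ and $\liminf d_k^0 > 0$; otherwise, using interior $C^{1,1}_{\mathrm{loc}}$ regularity for the fractional Laplacian and $\|v_k\|_\infty \leqslant 1$, a subsequential limit $v$ would satisfy $v(0) = 1$ while vanishing at arbitrarily close points, an absurdity. Passing to a further subsequence, set $d^\ast := \lim d_k^\ast \in (0,+\infty]$. Arzel\`a--Ascoli plus the fractional regularity then yields a locally uniform limit $v$ solving $(-\Delta)^{\alpha/2} v = |v|^{p-1}v$ in one of the following configurations: (A) $v>0$ bounded on $\R^n$; (B) $v$ bounded on $\R^n$, antisymmetric about $\{y_n = -d^0\}$ and positive above it; (C) $v>0$ on a half-space with zero Dirichlet data outside; or (D) a wedge-type configuration combining (B) and (C). Cases (A) and (C) are ruled out by the usual subcritical fractional Liouville theorem and by the half-space Liouville result exploited in \cite{che-li-li}, respectively.

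\textbf{Main obstacle: cases (B) and (D).} The heart of the matter is a Liouville statement for the antisymmetric limit $v$. Following the approach flagged in the abstract, I would run a variant of the moving planes method with planes $T_\mu := \{y_n = \mu\}$ for $\mu \geqslant -d^0$, comparing $v(y)$ with its reflection $v_\mu(y) := v(\bar y, 2\mu - y_n)$ and controlling $w_\mu := v_\mu - v$. The antisymmetry plane itself provides a convenient base configuration on which $v \equiv 0$. Because $v$ takes negative values below $T_{-d^0}$, the non-local decomposition of $(-\Delta)^{\alpha/2} w_\mu$ picks up an extra integrodifferential contribution that would vanish in the purely positive-solution setting; folding antisymmetry into that contribution in order to recover a usable maximum principle for $w_\mu$ is the critical technical step, and is presumably the ``certain variation'' alluded to by the authors. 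Once $w_\mu \leqslant 0$ is established for all admissible $\mu$, strict monotonicity $\partial_{y_n} v > 0$ on $\{y_n > -d^0\}$ follows, and combined with antisymmetry, boundedness, and the subcriticality of $p$, it forces a contradiction (for example by extracting a one-dimensional profile in the transverse variables and invoking a 1-D Liouville argument). Case (D) is handled analogously once the wedge geometry is accounted for, and tracing the contradiction back through the rescaling completes the proof.
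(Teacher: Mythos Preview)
Your overall architecture matches the paper's: rescaling, case splitting by the distance of $x_k$ to the two boundary pieces $\partial'\Omega^+=\{x_n=0\}$ and $\partial^+\Omega^+=\partial\Omega\cap\{x_n>0\}$, and invoking Liouville-type results for the limit equations. Two points deserve correction, however.

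\textbf{The boundary cases cannot be handled by interior regularity.} Your ``preliminary lemma'' asserting $\liminf d_k^\partial>0$ and $\liminf d_k^0>0$ is where the proposal breaks. If, say, $d_k^\partial\to 0$, then the rescaled domain $\Omega_k^+$ does not contain a fixed neighbourhood of the origin, and interior Schauder estimates for $(-\Delta)^{\alpha/2}$ give bounds only on sets staying a fixed distance from $\partial\Omega_k^+$; they degenerate as the boundary approaches $0$, so you cannot pass to a limit and simultaneously conclude $v(0)=1$ and $v=0$ nearby. The paper treats these cases (its Cases~5--9) by constructing an explicit barrier: with the tangent unit ball at the boundary point centred at the origin, set $\psi_1(x)=C(1-|x|^2)_+^{\alpha/2}$, take its Kelvin transform $\psi_2$, and use $\varphi=t\psi_2+\xi$ (with $\xi$ a cutoff) as a supersolution on an annular region. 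This yields a \emph{uniform} $C^{\alpha/2}$ estimate $v_k(x)\leqslant C|x-p^k|^{\alpha/2}$ near the boundary point $p^k$, from which $1=v_k(0)-v_k(p^k)\to 0$ gives the contradiction. This barrier step is not automatic and is the missing ingredient in your proposal.

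\textbf{The contradiction in the wedge case is simpler than you suggest.} In your Case~(D) (the paper's Case~4), once monotonicity of $v$ in the $x_n$-direction is established via Theorem~\ref{prop: monotonicity}, no one-dimensional Liouville argument is needed: since $v_k(0)=1=\sup v_k$, the limit $v$ attains its global maximum at the interior point $0$, which is incompatible with strict monotonicity in $x_n$. Similarly, for your Case~(B) (the anti-symmetric half-space), the paper simply invokes the existing Liouville theorem of \cite{zhu-li} rather than redoing a moving-planes argument.
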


It is well known that Liouville-type theorems play vital rules in obtain a priori estimates as they could provide contradiction after rescaling and blowing-up. We would like to mention that different types of Liouville theorems for fractional Laplacian operators are already available (see for example \cite{che-fan-yan}, \cite{che-li-li}, \cite{fal-wet}, \cite{qua-xia}, \cite{sil}, \cite{zhu-che-cui-yua}, \cite{zhu-li}).
Due to the anti-symmetric property of our solutions, the blow-up could happen at various locations of $\Omega^+$. Thus we would need Liouville-type theorems in $\mathbb{R}^n$ (\cite{zhu-che-cui-yua}), in $\mathbb{R}^n_+$ (\cite{che-fan-yan,zhu-li}) and in quarter spaces of $\mathbb{R}^n$ in order to derive a contradiction. As far as we know there is no Liouville theorems for the following equation~\eqref{eq: limit} in the quarter space situations. 
\begin{align}\label{eq: limit}
	\begin{cases}
		(-\Delta)^{\frac{\alpha}{2}}v(x)=v^p(x), u>0  &\text{in}\:\{x_1>0, x_2>0\},
		\\
		v \: \text{is anti-symmetric about} \: &\{x_1=0\},
		\\
		v(x)=0 \: &\text{in} \: \{x_2\leqslant 0 \},
	\end{cases}
\end{align}
where $1<p<\frac{n+\alpha}{n-\alpha}$. 
In order to overcome the difficulty, we apply the moving planes method for fractional Laplacian equations to obtain the following monotonicity result for bounded solutions of ~\eqref{eq: limit}:

\begin{theorem}\label{prop: monotonicity}
	Assume $v\in L^{\infty}\cap C^{1,1}_{\mathrm{loc}}(\R^n)$ is a solution of \eqref{eq: limit} for $1<p<\frac{n+\alpha}{n-\alpha}$, then $v(x)$ is monotone increasing in the $x_2$ direction in $\{x_1>0, x_2>0\}$.
\end{theorem}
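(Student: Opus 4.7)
\emph{Setup.} For each $\lambda > 0$ set $T_\lambda := \{x_2 = \lambda\}$, $\Sigma_\lambda := \{x_2 < \lambda\}$, $x^\lambda := (x_1, 2\lambda - x_2, x_3, \ldots, x_n)$, $v_\lambda(x) := v(x^\lambda)$, and $w_\lambda := v_\lambda - v$. The theorem reduces to showing
\[
w_\lambda(x) \geq 0 \quad \text{for every } x \in D_\lambda := \{x_1 > 0,\; 0 < x_2 < \lambda\}, \;\forall \lambda > 0,
\]
since this yields $v(x_1, x_2, x') \leq v(x_1, 2\lambda - x_2, x')$ for all $0 < x_2 < \lambda$, which is the asserted monotonicity. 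By construction $w_\lambda$ is anti-symmetric across $T_\lambda$, and it inherits anti-symmetry in $x_1$ from $v$. On $\{x_1 > 0,\, x_2 \leq 0\}$ one has $v \equiv 0$ and $v_\lambda > 0$, so $w_\lambda \geq 0$ there automatically. In $D_\lambda$ both $v$ and $v_\lambda$ are positive solutions of the PDE in \eqref{eq: limit}, and the mean value theorem gives
\[
(-\Delta)^{\alpha/2} w_\lambda(x) = c_\lambda(x)\, w_\lambda(x) \quad \text{in } D_\lambda,
\]
with $0 \leq c_\lambda \leq p\,\|v\|_{L^\infty}^{p-1}$ independently of $\lambda$.

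\emph{Starting the planes.} For $\lambda > 0$ small I would invoke an anti-symmetric narrow-region principle for the fractional Laplacian in the spirit of \cite{che-li-li}. If $w_\lambda(x) < 0$ somewhere in $D_\lambda$, the two anti-symmetries of $w_\lambda$ force $\inf_{\R^n} w_\lambda = \inf_{D_\lambda} w_\lambda < 0$. Taking an approximate minimizer $\bar x \in D_\lambda$, splitting the integral defining $(-\Delta)^{\alpha/2} w_\lambda(\bar x)$ into contributions from $\Sigma_\lambda$ and its $T_\lambda$-reflection, and folding the latter via $w_\lambda(y^\lambda) = -w_\lambda(y)$, one derives an estimate of the form
\[
(-\Delta)^{\alpha/2} w_\lambda(\bar x) \leq -C\,(\lambda - \bar x_2)^{-\alpha}\,|w_\lambda(\bar x)|.
\]
Combined with the equation and the uniform bound on $c_\lambda$, this is a contradiction once $\lambda$ is small enough.

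\emph{Pushing the planes.} Define $\lambda_0 := \sup\{\lambda > 0 : w_\mu \geq 0 \text{ in } D_\mu,\;\forall\, 0 < \mu \leq \lambda\}$, and aim to show $\lambda_0 = +\infty$. If $\lambda_0 < \infty$, continuity gives $w_{\lambda_0} \geq 0$ in $D_{\lambda_0}$, and the strong maximum principle for anti-symmetric solutions upgrades this to $w_{\lambda_0} > 0$ in $D_{\lambda_0}$: indeed, $w_{\lambda_0} \equiv 0$ would force $v(x_1, 2\lambda_0, x') = v(x_1, 0, x') = 0$ for $x_1 > 0$, contradicting $v > 0$ in the quarter space. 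To extend past $\lambda_0$ I would apply the narrow-region principle on the thin new slab $\{\lambda_0 < x_2 < \lambda_0 + \epsilon\}$ and combine this with the strict positivity of $w_{\lambda_0}$ on $D_{\lambda_0}$ together with continuity of $w_\lambda$ in $\lambda$ on compact subsets.

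\emph{Main obstacle.} The key difficulty is that $D_\lambda$ is \emph{unbounded} in $x_1$ and in $x_3, \ldots, x_n$, while $v$ is only assumed bounded with no a priori decay at infinity. Consequently the infimum of $w_\lambda$ may fail to be attained, and strict positivity of $w_{\lambda_0}$ on compact subsets of $D_{\lambda_0}$ need not upgrade to a uniform lower bound over the unbounded part, which is exactly what is required to push past $\lambda_0$. I would handle this by a translation and blow-down argument: take a minimizing sequence $\{x_k\} \subset D_\lambda$ with $w_\lambda(x_k) \to \inf w_\lambda < 0$, translate along the invariant directions $x_3, \ldots, x_n$ and, where possible, $x_1$, and extract a $C^{1,1}_{\mathrm{loc}}$ limit using standard fractional regularity. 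Depending on whether $x_{k,1} \to +\infty$, $x_{k,1} \to 0^+$, or $x_{k,1}$ stays in a compact subinterval of $(0, \infty)$, the limiting function is a bounded positive solution of a full-space, half-space, or quarter-space version of \eqref{eq: limit}, to each of which a Liouville theorem cited earlier in the excerpt applies and forces the limit to vanish---contradicting the strict negativity of the infimum. Making this case analysis rigorous, in particular verifying that each limiting configuration falls within the scope of an available anti-symmetric Liouville result with the correct growth and subcriticality hypotheses, is the principal technical hurdle.
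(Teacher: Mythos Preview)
Your moving-planes setup, the narrow-region start, and the definition of $\lambda_0$ all match the paper's proof. The genuine gap is in how you handle the unboundedness of $D_\lambda$.

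The paper does \emph{not} run a translation/blow-down argument on minimizing sequences inside the moving planes. Instead it isolates a preliminary lemma (Lemma~\ref{lemma: decay} in the paper) showing that any bounded solution of \eqref{eq: limit} satisfies $v(x)\to 0$ as $|x|\to\infty$; this is proved once, by exactly the kind of translate-and-apply-Liouville argument you sketch. With decay in hand, $w_\lambda\to 0$ at infinity automatically, so the infimum of $w_\lambda$ over $D_\lambda$ is \emph{attained}, and the anti-symmetric maximum principles of Section~2 (Propositions~\ref{prop: decay}--\ref{prop: smp}, all of which carry the hypothesis $w(x)\to 0$ at infinity) apply directly at every step: starting the plane, the claim that $w_\lambda\ge 0$ on $\Sigma^+_{\lambda_0-\delta}$ for $\lambda$ slightly past $\lambda_0$, and the narrow-region principle on $\Sigma^+_\lambda\setminus\Sigma^+_{\lambda_0-\delta}$.

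Your inline approach runs into trouble precisely where you flag it. A minimizing sequence $x_k\in D_\lambda$ has $x_{k,2}\in(0,\lambda)$ bounded, so if $x_{k,1}$ also stays in a compact subinterval of $(0,\infty)$ and only $|x_k'|\to\infty$, translating in $x'$ produces a limit that is again a bounded solution of the \emph{quarter-space} problem \eqref{eq: limit}. You then invoke a quarter-space Liouville theorem---but no such theorem is available; its absence is exactly why the paper proves monotonicity instead. So this branch of your case analysis is circular. The fix is to prove decay of $v$ first: there the blow-up sequence is not confined to a slab in $x_2$, so one of $x^m_1$, $x^m_2$ can be taken unbounded and the limit lands in a genuine half-space or full-space configuration where the cited Liouville theorems apply.
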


This is one of the novelty of our present work. Another novelty is that we obtain various maximum principle results in the case of anti-symmetric functions. We believe the ideas contained in these results could have potential applications in the study of other types of fractional Laplacian problems.

This paper is organized as follows. In Section 2 we present various types of maximum principles in the case of anti-symmetric functions to be used in later sections. The important monotonicity result  Theorem~\ref{prop: monotonicity} is proved in Section 3 using the method of moving planes. Finally in Section 4, we give the proof of our main result Theorem~\ref{thm: apriori} using the direct rescaling and blow-up argument.

\section{Maximum principles}
Let us start by fixing some notations in this section. A point in $\R^n$ will be denoted by :
	\begin{equation*}
		x=(x_1,x_2,x^\prime),\: x^\prime\in\mathbb{R}^{n-2}.
	\end{equation*}
	Let $T_i$ ($i=1,2$) denote the reflection of any point $x\in \R^n$ about the hyperplane $\{x_i=0\}$ , i.e.,
	\begin{equation*}
		T_1(x_1,x_2,x^\prime)=(-x_1,x_2,x^\prime),\: T_2(x_1,x_2,x^\prime)=(x_1,-x_2,x^\prime).
	\end{equation*}
	Denote by $I$, $A$, $B$ to be the areas in $\R^n$ respectively:
	\begin{equation*}
		\begin{aligned}
			I=(0,+\infty)\times(0,+\infty)\times\mathbb{R}^{n-2},\\
			A=(0,+\infty)\times (0,1)\times\mathbb{R}^{n-2},\\
			B=(0,+\infty)\times (1,+\infty)\times\mathbb{R}^{n-2}.
		\end{aligned}
	\end{equation*}
We are going to prove some maximum principles for anti-symmetric solutions of certain fractional Laplacian equations to be used in the following sections. Let us first prove the following decay estimates:
\begin{proposition}\label{prop: decay}
	Let
	$w\in C^0(\mathbb{R}^n)\cap L^\infty(\mathbb{R}^n)\cap C^{1,1}_{\mathrm{loc}}\left(A\right)$ be a solution to the following equation:
	\begin{equation*}
		\begin{cases}
			w(T_1x)=w(T_2x)=-w(x)\:&\text{in} \: \mathbb{R}^n,\\
			w\geqslant 0\:&\text{in}\: B.
		\end{cases}
	\end{equation*}
	Suppose that
	\begin{equation}\label{eq: mini}
		\min_{x\in A}w(x)=w(x^*)<0,
	\end{equation}
for some $x^*\in A$.
	Then
	\begin{equation}\label{eq: decay}
		(-\Delta)^{\frac{\alpha}{2}} w(x^*)\leqslant C_{0}\dist (x^*,\partial A)^{-\alpha}w(x^*),
	\end{equation}
	where $C_0>0$ is a constant depending only on $n$ and $\alpha$.
\end{proposition}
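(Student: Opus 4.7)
The plan is to exploit the double anti-symmetry of $w$ (about both $\{x_1=0\}$ and $\{x_2=0\}$) to rewrite the singular integral defining $(-\Delta)^{\alpha/2}w(x^*)$ as an integral over the first quadrant $I=\{y_1>0,\,y_2>0\}=A\cup B$ with a symmetrized kernel, and then to combine the minimum condition $w\geqslant w(x^*)$ on $A$ with the assumed positivity $w\geqslant 0$ on $B$ to bound the integrand from above by a negative quantity of the right order.

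Concretely, I split $\R^n$ into the four quadrants of the $(y_1,y_2)$-plane and, in the three non-$I$ quadrants, perform the reflections $y=T_1\hat y$, $y=T_2\hat y$, and $y=T_1T_2\hat y$. Using $w(T_1\hat y)=w(T_2\hat y)=-w(\hat y)$ and $w(T_1T_2\hat y)=w(\hat y)$, the principal-value integral becomes
\begin{equation*}
(-\Delta)^{\alpha/2}w(x^*)=C_{n,\alpha}\int_{I}\bigl\{w(x^*)\,M-w(y)\,N\bigr\}\intd y,
\end{equation*}
where $M=K_1+K_2+\tilde K_1+\tilde K_2$, $N=(K_1-K_2)-(\tilde K_1-\tilde K_2)$, and the four kernels $K_1,K_2,\tilde K_1,\tilde K_2$ stand for $|x^*-y|^{-(n+\alpha)}$, $|x^*-T_2y|^{-(n+\alpha)}$, $|x^*-T_1y|^{-(n+\alpha)}$, $|x^*-T_1T_2y|^{-(n+\alpha)}$. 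A short convexity argument on $f(u)=u^{-(n+\alpha)/2}$ — namely, $s\mapsto f(s+b)-f(s+b')$ is decreasing in $s$ for any $b<b'$, since $f'$ is increasing — shows $N\geqslant 0$ on $I$.

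With $N\geqslant 0$ in hand I split the $I$-integral across $A$ and $B$. For $y\in A$ the minimum condition $w(y)\geqslant w(x^*)$ allows the replacement of $w(y)$ by $w(x^*)$, and the bracket telescopes to $w(x^*)(M-N)=2w(x^*)(K_2+\tilde K_1)$. For $y\in B$ the positivity $w(y)\geqslant 0$ kills the $-w(y)N$ term, so the integrand is at most $w(x^*)M\leqslant w(x^*)K_1$ (using $w(x^*)<0$). Undoing the reflections by the changes of variable $z=T_2y$ and $z=T_1y$ on $A$, and $z=y$ on $B$, yields
\begin{equation*}
(-\Delta)^{\alpha/2}w(x^*)\leqslant C_{n,\alpha}\,w(x^*)\left[2\int_{T_2A}+2\int_{T_1A}+\int_B\right]\frac{\intd z}{|x^*-z|^{n+\alpha}}.
\end{equation*}

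The final step is a short geometric lower bound on this integral. Pick $P\in\partial A$ with $|x^*-P|=d$. Because $\partial A\subset\{z_1=0\}\cup\{z_2=0\}\cup\{z_2=1\}$, the point $P$ lies on exactly one of these three hyperplanes, and the region on the other side is correspondingly one of $T_1A$, $T_2A$, $B$. Since $d=\min\{x_1^*,\,x_2^*,\,1-x_2^*\}$, the half of $B_d(P)$ lying on that side is entirely contained in the chosen region, and on it $|x^*-z|\leqslant 2d$; the integral over that half-ball is thus at least $c_n d^n(2d)^{-(n+\alpha)}=c'_n d^{-\alpha}$. Multiplying by the negative number $w(x^*)$ flips the direction of the inequality and produces \eqref{eq: decay}. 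The main obstacle is the symmetrization/convexity step that produces $N\geqslant 0$ and the clean telescoping to $2w(x^*)(K_2+\tilde K_1)$ on $A$; the case analysis for fitting the half-ball inside $T_1A$, $T_2A$, or $B$ is then routine.
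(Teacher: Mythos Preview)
Your argument is correct and follows essentially the same route as the paper: split $\R^n$ into the four reflected quadrants, use the double anti-symmetry to produce the symmetrized kernel (your $N$ is exactly the paper's $K(x^*,y)$), use the minimum on $A$ and the nonnegativity on $B$ to discard the sign-indefinite terms, and then bound the remaining integral below by $C_0\,\dist(x^*,\partial A)^{-\alpha}$. The only cosmetic differences are that on $B$ you keep a single kernel term while the paper keeps all four (both are valid lower bounds), and that you supply self-contained proofs for the two facts the paper outsources to references: the convexity argument for $N\geqslant 0$ (the paper cites \cite{li-zhuo}) and the half-ball estimate for the integral (the paper cites \cite{che-hua-li}).
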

\begin{proof}
	Straightforward calculation using the definition of the fractional Laplacian operator~\eqref{eq:fl} produces:
	\begin{equation}
		\label{eq: frac mini}
		\begin{aligned}
			(-\Delta)^{\frac{\alpha}{2}}w(x^*)&=C_{n,\alpha}P.V. \int_{\mathbb{R}^n}\frac{w(x^*)-w(y)}{|x^*-y|^{n+\alpha}}\intd y\\
			&=C_{n,\alpha}\bigg[P.V. \int_{A\cup T_1A\cup T_2A\cup T_1T_2A}\frac{w(x^*)-w(y)}{|x^*-y|^{n+\alpha}}\intd y \\
			&\qquad\qquad+\int_{B\cup T_1B\cup T_2B\cup T_1T_2B}\frac{w(x^*)-w(y)}{|x^*-y|^{n+\alpha}}\intd y\bigg]\\
			&=C_{n,\alpha}\bigg[P.V. \int_{A}\underbrace{(w(x^*)-w(y))}_{\leqslant 0\ \text{by} \ \eqref{eq: mini}}K(x^*,y)\intd y \\
			&\qquad+2 w(x^*)\int_{A}\frac{1}{|x^*-T_1y|^{n+\alpha}}+\frac{1}{|x^*-T_2y|^{n+\alpha}}\intd y \\
			&\qquad+\int_{B\cup T_1B\cup T_2B\cup T_1T_2B}\frac{w(x^*)}{|x^*-y|^{n+\alpha}}\intd y-\int_{B}\underbrace{w(y)}_{\geqslant 0}K(x^*,y)\intd y\bigg]\\
			&\leqslant L(x^*)w(x^*),
		\end{aligned}
	\end{equation}
	where $K(x,y)$ and $L(x)$ are defined as follow:
	\begin{align*} &K(x,y) =\frac{1}{|x-y|^{n+\alpha}}-\frac{1}{|x-T_1y|^{n+\alpha}} -\frac{1}{|x-T_2y|^{n+\alpha}}+\frac{1}{|x-T_1T_2y|^{n+\alpha}}, \\
		\label{eq:min4}&L(x)=C_{n,\alpha}\left[2\int_{A}\frac{1}{|x-T_1y|^{n+\alpha}}+\frac{1}{|x-T_2y|^{n+\alpha}}\intd y+\int_{B\cup T_1B\cup T_2B\cup T_1T_2B}\frac{1}{|x-y|^{n+\alpha}}\intd y\right].
	\end{align*}
	We have used in \eqref{eq: frac mini} the fact that $K(x,y)\geqslant 0$ (see Lemma 3.2 of ~\cite{li-zhuo}) for $x\in A$ and $y\in A\cup B$.
	It follows from direct calculation (see for example Lemma 2.1 of ~\cite{che-hua-li}) that
	\begin{equation*}
		L(x)\leqslant C_{0}\dist (x,\partial A)^{-\alpha},
	\end{equation*}
	for $x\in A$ and some constant $C_0>0$ depending only on $n$ and $\alpha$.
	Now we have derived \eqref{eq: decay} and the proof is finished.
\end{proof}

The next proposition is a maximum principle for a fractional Laplacian equation in a special form:

\begin{proposition}[Maximum principle]\label{prop: mp}
	Let
	$w\in C^0(\mathbb{R}^n)\cap L^\infty(\mathbb{R}^n)\cap C^{1,1}_{\mathrm{loc}}\left(A\right)$ satisfies the following:
	\begin{equation}
		\label{eq: mp}
		\begin{cases}
			(-\Delta)^{\frac{\alpha}{2}}w+cw\geqslant0\:&\text{in}\: A,\\
			w(T_1x)=w(T_2x)=-w(x)\:&\text{for any}\: x\in\mathbb{R}^n,\\
			w\geqslant 0\:&\text{in}\: B,\\
		    w(x)\rightarrow 0 \: \text{as} \: x\rightarrow \infty.&
		\end{cases}
	\end{equation}
	Then there exists $M>0$ depending only on $n$ and $\alpha$, such that whenever $c\geqslant -M$, we have
	\begin{equation*}
		w\geqslant 0\qquad\text{in}\quad A.
	\end{equation*}
\end{proposition}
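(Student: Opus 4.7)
The plan is to argue by contradiction. Suppose that $\inf_{A} w < 0$. The first task is to show that this negative infimum is attained at some interior point $x^* \in A$. I analyze the boundary $\partial A$: on the faces $\{x_1=0\}$ and $\{x_2=0\}$ the antisymmetry relations $w(T_1x)=-w(x)$ and $w(T_2x)=-w(x)$ force $w\equiv 0$; on the face $\{x_2=1\}$, continuity of $w$ together with the hypothesis $w\geqslant 0$ in $B$ gives $w\geqslant 0$; and since $w(x)\to 0$ as $|x|\to\infty$, any minimizing sequence with $|x_k|\to\infty$ must have $w(x_k)\to 0$. Therefore a minimizing sequence lies in a bounded region of $\overline{A}$, has a convergent subsequence, and its limit cannot be on $\partial A$ (where $w\geqslant 0$). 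So the infimum is attained at a point $x^*\in A$ with $w(x^*)<0$, which puts us precisely in the setting of Proposition~\ref{prop: decay}.

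The second step combines the decay estimate with the differential inequality. Proposition~\ref{prop: decay} yields
\begin{equation*}
(-\Delta)^{\frac{\alpha}{2}} w(x^*) \leqslant C_0 \dist(x^*,\partial A)^{-\alpha}\, w(x^*),
\end{equation*}
while the assumed inequality $(-\Delta)^{\alpha/2} w + c w \geqslant 0$ at $x^*$ gives $(-\Delta)^{\alpha/2} w(x^*) \geqslant -c\, w(x^*)$. Chaining these and dividing by the negative quantity $w(x^*)$ (which reverses the inequality) produces
\begin{equation*}
-c \geqslant C_0\, \dist(x^*,\partial A)^{-\alpha}.
\end{equation*}

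The third step exploits the geometry of $A$. Writing $x^*=(x_1^*,x_2^*,{x^*}')$, we have $x_2^*\in(0,1)$, hence $\dist(x^*,\partial A) \leqslant \min(x_2^*,1-x_2^*) \leqslant \tfrac{1}{2}$. Consequently $-c \geqslant C_0\cdot 2^\alpha$, i.e. $c \leqslant -C_0\cdot 2^\alpha$. Choosing any $M>0$ with $M<C_0\cdot 2^\alpha$ (depending only on $n$ and $\alpha$ through $C_0$), the standing hypothesis $c \geqslant -M$ contradicts this bound. The assumption $\inf_A w<0$ must therefore fail, giving $w\geqslant 0$ in $A$.

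I expect the one delicate point is the attainment argument in the first step, since $A$ is unbounded; the decay condition $w(x)\to 0$ at infinity is exactly what is needed to keep minimizing sequences from escaping, and the antisymmetry on the two coordinate hyperplanes is what keeps them from accumulating on $\partial A$. Once attainment is secured, the rest is a direct substitution into Proposition~\ref{prop: decay} together with the elementary bound on $\dist(\cdot,\partial A)$ coming from the finite thickness of the $x_2$-slab in $A$.
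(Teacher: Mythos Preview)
Your proof is correct and follows essentially the same approach as the paper's: argue by contradiction, locate an interior minimum $x^*\in A$, apply Proposition~\ref{prop: decay}, use the slab bound $\dist(x^*,\partial A)\leqslant\tfrac12$, and choose $M<2^\alpha C_0$ to force a contradiction with $c\geqslant -M$. The paper's version is terser---it takes the attainment of the minimum for granted from continuity and the decay hypothesis---whereas you spell out carefully why a minimizing sequence cannot escape to infinity or to $\partial A$; but the substance is the same.
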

\begin{proof}
	We will prove this proposition by contradiction. Since $w\in C^0(\R^n)$ and decays at infinity, we can suppose that
	\begin{equation*}
		\min_{x\in A}w(x)=w(x^*)<0,
	\end{equation*}
for some $x^*\in A$.
	Then we know from \eqref{eq: decay} that
	\begin{equation*}
		(-\Delta)^{\frac{\alpha}{2}} w(x^*)\leqslant C_{0}\dist (x^*,\partial A)^{-\alpha}w(x^*).
	\end{equation*}
	Note that $\dist (x^*,\partial A)\leqslant \frac{1}{2}$. We can choose $M<4^{\frac{\alpha}{2}}C_0$ to derive
	\begin{equation*}
		(-\Delta)^{\frac{\alpha}{2}} w(x^*)+cw(x^*)\leqslant[4^{\frac{\alpha}{2}}C_0-M]w(x^*)<0.
	\end{equation*}
	 This contradicts the equation \eqref{eq: mp} thus proves that
	\begin{equation*}
		w\geqslant 0\qquad\text{in}\quad A.
	\end{equation*}
	This ends the proof of Proposition~\ref{prop: mp}.
\end{proof}

In the following we derive a version of the strong maximum principle to be used in the next section.

\begin{proposition}[Strong maximum principle]\label{prop: smp}
	Let
	$w\in C^0(\mathbb{R}^n)\cap L^\infty(\mathbb{R}^n)\cap C^{1,1}_{\mathrm{loc}}\left(A\right)$ satisfies the following:
	\begin{equation*}
		\begin{cases}
			(-\Delta)^{\frac{\alpha}{2}}w+cw\geqslant0\:&\text{in}\: A,\\
			w(T_1x)=w(T_2x)=-w(x)\:&\text{for any}\: x\in\mathbb{R}^n,\\
			w\geqslant 0\:&\text{in}\: B, \\
			w(x)\rightarrow 0 \: \text{as} \: x\rightarrow \infty,&
		\end{cases}
	\end{equation*}
	where $c$ is a bounded function. Then either
	\begin{equation*}
		w> 0\qquad\text{in}\quad A,
	\end{equation*} or
	\begin{equation*}
		w\equiv 0\qquad\text{in}\quad A.
	\end{equation*}
\end{proposition}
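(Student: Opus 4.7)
The plan is to prove Proposition~\ref{prop: smp} in two stages: first a weak maximum principle giving $w\geqslant 0$ in $A$, then a ``touching zero'' argument that turns any interior zero into the identically-zero alternative.

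For the first stage I would argue by contradiction exactly as in Proposition~\ref{prop: mp}. If $w$ attained a strictly negative minimum at some $x^*\in A$, the decay estimate in Proposition~\ref{prop: decay} combined with the supersolution inequality would give
\begin{equation*}
-c(x^*)\geqslant C_0\,\dist(x^*,\partial A)^{-\alpha}.
\end{equation*}
Since $\dist(x^*,\partial A)\leqslant 1/2$, this is immediately contradictory when $\|c\|_\infty<4^{\alpha/2}C_0$, which is precisely the regime covered by Proposition~\ref{prop: mp}. For merely bounded $c$ the direct estimate is insufficient, and one must localize: I would iterate Proposition~\ref{prop: mp} on narrow horizontal sub-strips of $A$ in the $x_2$ direction, so that the distance-to-boundary bound there is small enough to absorb any fixed $\|c\|_\infty$, and then propagate the sign to the rest of $A$ using the anti-symmetry and the standing hypothesis $w\geqslant 0$ on $B$. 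I expect this propagation to be the main obstacle.

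For the second stage, assume $w\geqslant 0$ on $A$ but $w\not\equiv 0$ on $A$, and suppose for contradiction that $w(x_0)=0$ at some $x_0\in A$. Using the anti-symmetric decomposition from the proof of Proposition~\ref{prop: decay} together with $w(x_0)=0$, a direct computation gives
\begin{equation*}
(-\Delta)^{\frac{\alpha}{2}}w(x_0)=-C_{n,\alpha}\int_{A\cup B} w(y)\,K(x_0,y)\intd y.
\end{equation*}
The kernel $K(x_0,\cdot)$ is non-negative on $A\cup B$ (Lemma~3.2 of~\cite{li-zhuo}) and $w\geqslant 0$ on $A\cup B$, so the right-hand side is $\leqslant 0$. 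On the other hand, the supersolution inequality at $x_0$ together with $w(x_0)=0$ forces $(-\Delta)^{\alpha/2}w(x_0)\geqslant 0$. Combining the two, the integral must vanish; since $K(x_0,y)>0$ for a.e.\ $y\in A\cup B$, we conclude $w\equiv 0$ on $A\cup B$. The anti-symmetry relations $w(T_1x)=w(T_2x)=-w(x)$ then propagate this vanishing to all of $\R^n$, contradicting $w\not\equiv 0$ on $A$. This completes the dichotomy, so either $w>0$ in $A$ or $w\equiv 0$ in $A$.
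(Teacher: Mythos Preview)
Your Stage~2 (touching-zero) argument is exactly the paper's proof: compute $(-\Delta)^{\alpha/2}w(x^*)$ at an interior zero via the anti-symmetric kernel decomposition from Proposition~\ref{prop: decay}, compare with the supersolution inequality, and conclude that the integrand vanishes a.e., hence $w\equiv 0$ on $A$.

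For Stage~1, the paper also simply invokes Proposition~\ref{prop: mp} to obtain $w\geqslant 0$ in $A$; it does not attempt any localization for general bounded $c$. In effect the paper tacitly reads the hypothesis ``$c$ bounded'' in Proposition~\ref{prop: smp} as the same bound $c\geqslant -M$ from Proposition~\ref{prop: mp} (and in every application in Section~3 the coefficient is first made small by rescaling or by working on a narrow strip before Proposition~\ref{prop: smp} is used). Your concern about merely bounded $c$ is legitimate, but the paper does not resolve it either.

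One remark on your proposed localization: you cannot straightforwardly narrow the strip in the $x_2$-direction, because the reflection $T_2$ is pinned at $\{x_2=0\}$. Taking a sub-strip $\{0<x_2<\delta\}$ would require $w\geqslant 0$ on $\{\delta<x_2<1\}$ to play the role of ``$B$'', which is precisely what is not yet known; taking a sub-strip near $x_2=1$ destroys the $T_2$ anti-symmetry altogether. So the propagation you flag as ``the main obstacle'' is a genuine one, and the paper sidesteps it only by effectively restricting to the regime $c\geqslant -M$.
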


\begin{proof}
	Suppose that $w$ is not constant zero but $w(x^*)=0$ for some $x^*\in A$. We calculate as in \eqref{eq: frac mini} to obtain that
	\begin{equation*}
		\begin{aligned}
			0&\leqslant(-\Delta)^{\frac{\alpha}{2}}w(x^*)+c(x^*)w(x^*)\\&=C_{n,\alpha}P.V. \int_{\mathbb{R}^n}\frac{w(x^*)-w(y)}{|x^*-y|^{n+\alpha}}\intd y\\
			&=C_{n,\alpha}\bigg[P.V. \int_{A}-w(y)K(x^*,y)\intd y \\
			&-\int_{B}w(y)K(x^*,y)\intd y\bigg]\\
			&\leqslant 0.
		\end{aligned}
	\end{equation*}
	This implies that $w(y)\equiv 0$ in $A$ which is a contradiction to our assumption. Here in the last inequality we use the maximum principle Proposition~\ref{prop: mp}. This ends the proof of Proposition~\ref{prop: smp}.
\end{proof}

\section{A monotonicity result for a limit equation}

Let us consider the equation \eqref{eq: limit}
which is a special case of our main equation ~\eqref{eq:main} with $1<p<\frac{n+\alpha}{n-\alpha}$. We will prove Theorem~\ref{prop: monotonicity} in this section. We first present a useful lemma on the regularity and decay property of any solution to equation~\eqref{eq: limit}.

\begin{lemma}\label{lemma: decay}
	Suppose $v\in L^{\infty}\cap C^{1,1}_{\mathrm{loc}}(\mathbb{R}^n)$ is a solution to \eqref{eq: limit}, then $v\in C^{\frac{\alpha}{2}}(\mathbb{R}^n)$ and $v(x)\rightarrow 0$ as $x\rightarrow \infty$.
\end{lemma}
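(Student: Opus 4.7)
The plan splits naturally into two independent parts: the interior/boundary regularity giving $v\in C^{\alpha/2}(\mathbb{R}^n)$, and the decay at infinity. For the regularity part, observe that by anti-symmetry and the vanishing condition, $v$ may be viewed as the solution of a Dirichlet problem for the fractional Laplacian on the half-space $\{x_2>0\}$ with bounded right-hand side $f(x):=\operatorname{sgn}(x_1)|v(x)|^{p}\in L^\infty$ and zero exterior data on $\{x_2\leqslant 0\}$. Classical interior regularity for the fractional Laplacian (Silvestre) gives $C^{\alpha-\varepsilon}$ regularity in the open upper half-space, while the sharp boundary Hölder regularity (Ros-Oton–Serra type estimates) at $\{x_2=0\}$ produces the exponent $\alpha/2$ up to the boundary, which is the bottleneck. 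Across $\{x_1=0\}$ the anti-symmetry together with the interior regularity ensures continuity and preserves the Hölder exponent, so globally $v\in C^{\alpha/2}(\mathbb{R}^n)$.

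For the decay, I would argue by contradiction. Assume there exist $\varepsilon>0$ and a sequence $x_k\to\infty$ with $|v(x_k)|\geqslant\varepsilon$. By anti-symmetry about $\{x_1=0\}$ and the vanishing in $\{x_2\leqslant 0\}$, one may assume $x_k$ lies in the open quarter space $\{x_1>0,\,x_2>0\}$ with $v(x_k)\geqslant\varepsilon$; uniform Hölder continuity plus the fact that $v=0$ on $\{x_1=0\}\cup\{x_2=0\}$ forces both $(x_k)_1$ and $(x_k)_2$ to stay uniformly bounded away from $0$. Defining the translations $v_k(y):=v(y+x_k)$, Arzelà–Ascoli (using the uniform $C^{\alpha/2}$ modulus from Part 1 and the uniform $L^\infty$ bound) yields a locally uniform subsequential limit $v_\infty$ with $v_\infty(0)\geqslant\varepsilon$. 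Depending on the asymptotic behavior of $(x_k)_1$ and $(x_k)_2$, the limit $v_\infty$ satisfies the subcritical equation $(-\Delta)^{\alpha/2}v_\infty=v_\infty^{p}$ on one of three domains: if both components go to $\infty$, on all of $\mathbb{R}^n$ with $v_\infty\geqslant 0$ and bounded, ruled out by the Liouville theorem of Zhuo–Chen–Cui–Yuan; if only $(x_k)_1\to\infty$, on a half-space $\{y_2>-c\}$ with Dirichlet exterior condition, ruled out by the half-space Liouville theorem of Chen–Fan–Yan / Zhu–Li; and symmetrically if only $(x_k)_2\to\infty$, on a half-space with anti-symmetry about its boundary.

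The genuine obstacle is the remaining case where both $(x_k)_1$ and $(x_k)_2$ stay bounded, so that only $|x_k'|\to\infty$ in the transverse coordinates. Translating solely in $x'$, the limit $v_\infty$ is $x'$-invariant and reduces to a bounded solution of a two-dimensional quarter-space problem of exactly the form \eqref{eq: limit}, for which no off-the-shelf Liouville theorem exists (this is precisely the gap that Theorem~\ref{prop: monotonicity} addresses). To avoid circularity, I would handle this case with a separate finer argument: either a secondary blow-down/doubling step that sends this reduced 2D quarter-space problem back to one of the previous three cases, or a direct Kelvin-transform / integral representation using the antisymmetric Green's kernel of the quarter space whose decay, combined with subcriticality and the anti-symmetric and Dirichlet maximum principles of Section~2 (Propositions~\ref{prop: mp} and~\ref{prop: smp}), forces $v_\infty\equiv 0$. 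This last step is the only delicate part of the argument.
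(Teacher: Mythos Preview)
Your overall architecture matches the paper's exactly: regularity by citation to interior and boundary estimates, then decay by contradiction via translation, Arzel\`a--Ascoli, and reduction to known Liouville theorems. The paper splits the contradiction into only three cases---$x^m_1$ bounded, $x^m_2$ bounded, or both unbounded---and invokes the anti-symmetric Liouville theorem of \cite{zhu-li}, the half-space theorem of \cite{che-fan-yan}, and the whole-space theorem of \cite{zhu-che-cui-yua} respectively. It does not single out your fourth scenario (both components bounded, only $|x'|\to\infty$): the paper treats ``$x^m_1$ bounded'' as Case~1 irrespective of $x^m_2$ and applies \cite{zhu-li} to the anti-symmetric limit directly, so no quarter-space Liouville result is ever needed and there is no circularity with Theorem~\ref{prop: monotonicity}.

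Your proposed resolution of that fourth case, however, contains a genuine error: translating only in the $x'$-variables does \emph{not} make the limit $v_\infty$ independent of $x'$. What you recover is another bounded solution of the full $n$-dimensional problem \eqref{eq: limit}, not a two-dimensional reduction, so the ``2D quarter-space'' picture and the subsequent Kelvin/doubling sketch do not apply as written. If you want to make this case airtight without relying on the paper's reading of \cite{zhu-li}, the cleanest route is to observe that when both components stay bounded the limit $v_*$ is still anti-symmetric about a fixed hyperplane, nonnegative on one side, and satisfies $(-\Delta)^{\alpha/2}v_*\leqslant v_*^p$ throughout that entire half-space (equality where $v_*>0$; where $v_*=0$ one checks from the kernel and the anti-symmetry that $(-\Delta)^{\alpha/2}v_*\leqslant 0$), which already suffices for the anti-symmetric Liouville argument.
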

\begin{proof}
	See \cite{li-wu} or \cite{sil} for the interior regularity results. For regularity up to the boundary, one could adapt the argument in the proof of Theorem 1 in \cite{che-li-li}. Now let us prove the decay property by contradiction argument. Suppose there exists a sequence $\{x^m\}\in\mathbb{R}^n$ such that $|v(x^m)|\geqslant c_0>0$ for any $m$ and that $\lim\limits_{m\rightarrow \infty}|x^m|=+\infty$.
	
	By the anti-symmetry property of $v$, we may without loss of generality assume:
	\begin{equation*}
		x^m\in I =(0,+\infty)\times(0,+\infty)\times\mathbb{R}^{n-2}.
	\end{equation*}
	Define $v_m(x)=v(x-x^m)$. By the Arzel\`a-Ascoli Theorem, there exist a subsequence of $\{v_m\}$, still denoted by $\{v_m\}$ for convenience, that converges to some non-trivial $v_*\geqslant 0$ in $C^{\beta}(\mathbb{R}^n)$ for any $0<\beta<\frac{\alpha}{2}$.
	
	We are going to divide our argument into the following cases:
	\begin{enumerate}
		\item [Case 1:] If $x^m_1\rightarrow x^*_1<+\infty$ as $m\rightarrow \infty$, then $v_*$ satisfies:
		\begin{equation*}
			\begin{cases}
				(-\Delta)^{\frac{\alpha}{2}} v_*=v_*^p\: &\text{in}\: (-x^*_1,+\infty)\times\mathbb{R}^{n-1},\\
				v_*\text{ is anti-symmetric about } &\{x_1=-x^*_1\}.
			\end{cases}
		\end{equation*}
		This contradicts the Liouville theorem of anti-symmetric functions (see \cite{zhu-li} for a reference).
		\item [Case 2:] If $x^m_2\rightarrow x^*_2<+\infty$ as $m\rightarrow \infty$, then  $v_*$ satisfies:
		\begin{equation*}
			\begin{cases}
				(-\Delta)^{\frac{\alpha}{2}} v_*=v_*^p\:&\text{in}\: \mathbb{R}\times(-x^*_2,+\infty)\times\mathbb{R}^{n-2},\\
				v_*=0&\text{on} \: \mathbb{R}\times(-\infty,-x^*_2]\times\mathbb{R}^{n-2}.
			\end{cases}
		\end{equation*}
		This contradicts the Liouville theorem in the case of half space (See \cite{che-fan-yan} for a reference).
		\item [Case 3:]If both $x^m_i\rightarrow +\infty$ $(i=1,2)$ as $m\rightarrow +\infty$, then  $v_*$ satisfies:
		\begin{equation*}
			(-\Delta)^{\frac{\alpha}{2}} v_*=v_*^p\:\text{in}\:\mathbb{R}^n.
		\end{equation*}
		This contradicts the Liouville theorem in the whole space $\mathbb{R}^n$ (See \cite{zhu-che-cui-yua} for a reference).
	\end{enumerate}
	These three contradictions show that $v(x)\rightarrow 0$ as $|x|\rightarrow+\infty$ and ends the proof.
\end{proof}

Now let us prove the monotonicity property that we need in the proof of our main theorem. We are going to prove Theorem~\ref{prop: monotonicity} via the method of moving planes:

\begin{proof}
	
	In the following, we always denote
	$$\R^n:=\{x=(x_1,x_2,x'), x_1, x_2\in \R,  x^\prime\in\mathbb{R}^{n-2}\}.$$
	For any $\lambda>0$, let us denote
	$$ \Gamma_\lambda:=\{(x_1,\lambda,x'): x_1\in \R\},$$
	$$\Sigma_\lambda:=\{(x_1, x_2, x'): x_1\in \R, 0<x_2<\lambda\},$$ and
	$$\Sigma^+_\lambda:=\{(x_1, x_2, x'): x_1>0, 0<x_2<\lambda\}.$$
	Moreover, let $v_\lambda(x):=v(x_1,2\lambda-x_2,x')$ be the reflection of $v(x)$ about the hyperplane $\Gamma_\lambda$. In order to compare $v(x)$ and $v_\lambda(x)$, let $w_\lambda(x):=v_\lambda(x)-v(x)$. We are going to prove that $w_\lambda(x)>0$ in $\Sigma^+_\lambda$ for any $\lambda>0$. It follows from a direct calculation that $w_\lambda(x)$ satisfies the following equation:
	\begin{align*}
		\begin{cases}
			(-\Delta)^{\frac{\alpha}{2}}w_\lambda(x)=v^p_\lambda(x)-v^p(x)=p\varphi^{p-1}w_\lambda(x) \: & \text{in} \: \Sigma_\lambda^+,\\
			w_\lambda(x_1,2\lambda-x_2, x')=-w_\lambda(x_1, x_2, x') \\
			w_\lambda(x_1,x_2, x')=-w_\lambda(-x_1, x_2, x') \\
			w_\lambda(x)\geqslant 0 \: & \text{in} \: \{x_2 \leqslant0\},
		\end{cases}
	\end{align*}
	where $\varphi$ is a bounded function between $v(x)$ and $v_\lambda(x)$. We start by taking $\lambda>0$ small enough. In this case, we apply the rescaling $u_\lambda(x)=w_\lambda(\lambda x)$ such that $u_\lambda$ satisfies:
	\begin{align}\label{eq: v}
		\begin{cases}
			(-\Delta)^{\frac{\alpha}{2}}u_\lambda(x)=\lambda^\alpha c(x)u_\lambda(x) \: & \text{in} \: \Sigma_1^+,\\
			u_\lambda(x_1,2-x_2, x')=-u_\lambda(x_1, x_2, x') \\
			u_\lambda(x_1,x_2, x')=-u_\lambda(-x_1, x_2, x') \\
			u_\lambda(x)\geqslant 0 \: & \text{in} \: \{x_2 \leqslant0\},
		\end{cases}
	\end{align}
	where $c(x)$ is a bounded function.
	By taking $\lambda>0$ small enough, it follows from maximum principle for narrow domains Proposition~\ref{prop: mp} that $u_\lambda(x)\geqslant 0$ in $\Sigma_1^+$. Therefore $w_\lambda(x)\geqslant 0$ in $\Sigma_\lambda^+$. The equality cannot always hold since otherwise we obtain that $v(x)\equiv0$ in $\R^n$ which contradicts the assumption of equation~\eqref{eq: limit}. Thus we can apply the strong maximum principle Proposition~\ref{prop: smp} to obtain that $w_\lambda>0$ in $\Sigma_\lambda^+$. This gives a starting point for our moving.
	
	Now let
	$$ \lambda_0:=sup\{\lambda: w_\lambda>0 \: \text{in} \: \Sigma_\mu^+ \: \text{for any} \: 0<\mu<\lambda\}.$$
	Again from strong maximum principle Proposition~\ref{prop: smp},
	$w_{\lambda_0}>0$ in  $\Sigma_{\lambda_0}^+.$ We are going to show that for some $\varepsilon>0$,
	$$w_\lambda>0 \: \text{in} \: \Sigma_\lambda^+ \: \text{for} \: \lambda\in [\lambda_0, \lambda_0+\varepsilon).$$ We begin by proving the following claim:
	
	\textbf{Claim}: For any given $\delta>0$ small, there exists $\varepsilon>0$ small such that $w_{\lambda}\geqslant 0$ in $\Sigma^+_{\lambda_0-\delta}$ for any $\lambda\in [\lambda_0,\lambda_0+\varepsilon)$.
	
	Suppose the claim is false, there exists some $\delta>0$, a sequence $\lambda_n>\lambda_0$, $\lambda_n\rightarrow\lambda_0$ such that $$\inf_{\Sigma^+_{\lambda_0-\delta}}w_{\lambda_n}<0.$$ We may assume that there exists $\{x^n\}\in \Sigma^+_{\lambda_0-\delta}$ such that $$\inf_{\Sigma^+_{\lambda_0-\delta}}w_{\lambda_n}=w_{\lambda_n}(x^n)<0.$$
	If up to a subsequence $x^n\rightarrow \bar{x}\in \Sigma^+_{\lambda_0}$, we have that
	$$0<w_{\lambda_0}(\bar{x})=\lim_{n\rightarrow \infty}w_{\lambda_n}(x^n)\leqslant 0,$$ which is impossible. Therefore we have either $d(x^n, \partial \Sigma^+_{\lambda_n})\rightarrow 0$ or $x^n\rightarrow \infty$.
	If $x^n\rightarrow \infty$, then using the equation satisfied by $w_{\lambda_n}$ and the decay estimates Proposition~\ref{prop: decay} we obtain $$(-\Delta )^{\frac{\alpha}{2}}w_{\lambda_n}(x^n)=p\varphi^{p-1}(x^n)w_{\lambda_n}(x^n)\leqslant C_0d(x^n, \partial \Sigma^+_{\lambda_n})^{-\alpha}w_{\lambda_n}(x^n),$$
	where $\varphi(x)$ is between $v(x)$ and $v_{\lambda_n}(x)$.
	 Due to the decay property Lemma~\ref{lemma: decay} we have that $\varphi(x^n)\rightarrow 0$ as $n\rightarrow \infty$. Since $d(x^n, \partial \Sigma^+_{\lambda_n})\leqslant \frac{\lambda_n}{2}$ and $w_{\lambda_n}(x^n)<0$, we have 
	 $$p\varphi^{p-1}(x^n)\geqslant C_0(\frac{\lambda_n}{2})^{-\alpha}.$$ 
	 Letting $n\rightarrow \infty$ we obtain
	 $$0\geqslant C_0(\frac{\lambda_0}{2})^{-\alpha}>0$$
	 which is a contradiction! Thus the only possibility is that $d(x^n, \partial \Sigma^+_{\lambda_n})\rightarrow 0$. Now once again by Proposition~\ref{prop: decay} and the equation satisfied by $w_{\lambda_n}$ we obtain
	$$ 0=(-\Delta )^{\frac{\alpha}{2}}w_{\lambda_n}(x^n)+c(x^n)w_{\lambda_n}(x^n)\leqslant(C_0d(x^n, \partial \Sigma^+_{\lambda_n})^{-\alpha}+c(x^n))w_{\lambda_n}(x^n)<0,$$ where $n$ is chosen large enough such that $C_0d(x^n, \partial \Sigma^+_{\lambda_n})^{-\alpha}+c(x^n)>0$, and $c(x)$ is a bounded function. This is again a contradiction. Thus we have proved our claim.
	
	Now in the region $\Sigma^+_{\lambda}\backslash \Sigma^+_{\lambda_0-\delta}$, $w_\lambda$ satisfies:
	\begin{align}\label{eq: w}
		\begin{cases}
			(-\Delta)^{\frac{\alpha}{2}}w_\lambda(x)=c(x)w_\lambda(x)\: & \text{in} \: \Sigma^+_{\lambda}\backslash \Sigma^+_{\lambda_0-\delta},\\
			w_\lambda(x_1,2\lambda-x_2, x')=-w_\lambda(x_1, x_2, x') \\
			w_\lambda(x_1,x_2, x')=-w_\lambda(-x_1, x_2, x') \\
			w_\lambda(x)\geqslant 0 \: & \text{in} \: \{x_2 \leqslant\lambda_0-\delta\}
		\end{cases}
	\end{align}
	where $c(x)$ is bounded.
	Since $\delta+\varepsilon$ are chosen small, we could apply maximum principle for narrow domains Proposition~\ref{prop: mp} again to derive that $w_\lambda(x)\geqslant 0$ in $\Sigma_\lambda^+\backslash \Sigma^+_{\lambda_0-\delta}$. Combine with the previous claim and apply strong maximum principle again we derive that $w_\lambda(x)>0$ in $\Sigma_\lambda^+$ which gives a contradiction to the definition of $\lambda_0$. Thus we have proved that $w_\lambda(x)>0$ in $\Sigma_\lambda^+$  for any $\lambda>0$. The conclusion of Theorem~\ref{prop: monotonicity} follows immediately.
\end{proof}

\section{Proof of Theorem~\ref{thm: apriori}}
Now we are ready to prove our main result. The proof is based on a direct rescaling and blowing-up argument. As we mentioned in the Introduction, various types of Liouville results are needed. In the special case of the quarter space, we will use the monotonicity result Theorem~\ref{prop: monotonicity} to obtain a contradiction.
\begin{proof}
	We will prove Theorem~\ref{thm: apriori} by contradiction. Assume the conclusion is false, then we may assume that there exists a sequence of solutions $\{u_k\}_{k\in \N}$ to \eqref{eq:main} and points $\{x^k\}_{k\in \N}\in \Omega^+$ such that
	$$u_k(x^k):=\max_\Omega u_k=m_k\rightarrow +\infty,$$
	as $k\rightarrow +\infty$. Let $\lambda_k:=m_k^{\frac{1-p}{\alpha}}$.
	Define $v_k(x)=\frac{1}{m_k}u_k(\lambda_kx+x^k)$. Then we have
	$$(-\Delta)^{\frac{\alpha}{2}}v_k(x)=v_k^p(x), \: x\in \Omega_k^+,$$
	where $\Omega_k^+:=\left\lbrace x\in \R^n: x=\frac{y-x^k}{\lambda_k}, y\in \Omega^+\right\rbrace$. We define $\partial^+\Omega^+:=\left\lbrace x\in \partial \Omega^+: x_n>0\right\rbrace $ and $\partial' \Omega^+:=\left\lbrace x\in \partial \Omega^+: x_n=0\right\rbrace.$ Let $d_k:=\dist(x^k, \partial\Omega^+), d'_k:=\dist(x^k, \partial'\Omega^+)$ and $d^+_k:=\dist(x^k, \partial^+\Omega^+)$. We will divide our argument into several cases:
	
	\begin{enumerate}
		\item [Case 1:] $\lim\limits_{k\rightarrow +\infty}\frac{d_k}{\lambda_k}\rightarrow +\infty.$
		
		In this case, $\Omega_k^+\rightarrow \R^n$ as $k\rightarrow +\infty$. Moreover, $v_k(0)=1$ and $\|v_k\|_{L^{\infty}}\leqslant 1$. In view of Schauder estimates for fractional Laplacian equations (see for example Proposition 2.7, 2.8 and 2.9 of \cite{sil}), we have that there exists a function $v$ such that
		$$v_k(x)\rightarrow v(x)$$ and
		\begin{equation*}
		(-\Delta)^{\frac{\alpha}{2}}v_k(x)\rightarrow(-\Delta)^{\frac{\alpha}{2}}v(x),
		\end{equation*}
		as $k\rightarrow +\infty$.  Therefore
		\begin{equation}\label{eq: limit 1}
		(-\Delta)^{\frac{\alpha}{2}}v(x)=v^p(x) \: \text{in} \: \R^n.
		\end{equation}
		It follows from classical Liouville theorem (See \cite{zhu-che-cui-yua}) that \eqref{eq: limit 1} has no positive solution. However, $$v(0)=\lim\limits_{k\rightarrow +\infty} v_k(0)=1.$$
		This is a contradiction.
		
		\item [Case 2:] $\lim\limits_{k\rightarrow +\infty}\frac{d'_k}{\lambda_k}\rightarrow +\infty$, $\lim\limits_{k\rightarrow +\infty}\frac{d^+_k}{\lambda_k}\rightarrow C>0.$
		
	    In this case, $\Omega_k^+\rightarrow \left\lbrace x\in \R^n: x_n>-C\right\rbrace $ as $k\rightarrow +\infty$. Moreover, $v_k(0)=1$ and $\|v_k\|_{L^{\infty}}\leqslant 1$. In view of Schauder estimates for fractional Laplacian equations, there exists a function $v$ such that
	    $$v_k(x)\rightarrow v(x)$$ and
	    \begin{equation*}
	    (-\Delta)^{\frac{\alpha}{2}}v_k(x)\rightarrow(-\Delta)^{\frac{\alpha}{2}}v(x),
	    \end{equation*}
	    as $k\rightarrow +\infty$.  Therefore
	    \begin{equation}\label{eq: limit 2}
	    \begin{cases}
	    (-\Delta)^{\frac{\alpha}{2}}v(x)=v^p(x) \: &\text{in} \: \{ x_n>-C\}, \\
	    v(x)=0 \: & \text{in} \: \{ x_n\leqslant -C\}.
	    \end{cases}
	    \end{equation}
	    It follows from Liouville theorem (See \cite{che-fan-yan}) that \eqref{eq: limit 2} has no positive solution. However, $$v(0)=\lim\limits_{k\rightarrow +\infty} v_k(0)=1.$$
	    This is a contradiction.
	
	    \item [Case 3:] $\lim\limits_{k\rightarrow +\infty}\frac{d'_k}{\lambda_k}\rightarrow C>0$, $\lim\limits_{k\rightarrow +\infty}\frac{d^+_k}{\lambda_k}\rightarrow +\infty.$
	
	    In this case, $\Omega_k^+\rightarrow \left\lbrace x\in \R^n: x_1>-C\right\rbrace $ as $k\rightarrow +\infty$. Moreover, $v_k(0)=1$ and $\|v_k\|_{L^{\infty}}\leqslant 1$. In view of Schauder estimates for fractional Laplacian equations, there exists a function $v$ such that
	    $$v_k(x)\rightarrow v(x)$$ and
	    \begin{equation*}
	    (-\Delta)^{\frac{\alpha}{2}}v_k(x)\rightarrow(-\Delta)^{\frac{\alpha}{2}}v(x),
	    \end{equation*}
	    as $k\rightarrow +\infty$.  Therefore
	    \begin{equation}\label{eq: limit 4}
	    \begin{cases}
	    (-\Delta)^{\frac{\alpha}{2}}v(x)=v^p(x)\: &\text{in} \: \{ x_1>-C\}, \\
	    v \: \text{is anti-symmetric about} \: &\{x_1=-C\}.
	    \end{cases}
	    \end{equation}
	    It follows from Liouville theorem for anti-symmetric equations (See \cite{zhu-li}) that \eqref{eq: limit 4} has no positive solution. However, $$v(0)=\lim\limits_{k\rightarrow +\infty} v_k(0)=1.$$
	    This is a contradiction.
	
	    \item [Case 4:] $\lim\limits_{k\rightarrow +\infty}\frac{d'_k}{\lambda_k}\rightarrow C_1>0$, $\lim\limits_{k\rightarrow +\infty}\frac{d^+_k}{\lambda_k}\rightarrow C_2>0.$
	
	   In this case, $\Omega_k^+\rightarrow \left\lbrace x\in \R^n: x_1>-C_1, x_n>-C_2\right\rbrace $ as $k\rightarrow +\infty$. Moreover, $v_k(0)=1$ and $\|v_k\|_{L^{\infty}}\leqslant 1$. In view of Schauder estimates for fractional Laplacian equations, there exists a function $v$ such that
	   $$v_k(x)\rightarrow v(x)$$ and
	   \begin{equation*}
	   (-\Delta)^{\frac{\alpha}{2}}v_k(x)\rightarrow(-\Delta)^{\frac{\alpha}{2}}v(x),
	   \end{equation*}
	   as $k\rightarrow +\infty$.  Therefore
	    \begin{align*}
	    \begin{cases}
	    (-\Delta)^{\frac{\alpha}{2}}v(x)=v^p(x) &\text{in}\:\{x_1>-C_1, x_n>-C_2\},
	    \\
	    v \: \text{is anti-symmetric about} \: &\{x_1=-C_1\},
	    \\
	    v(x)=0 \: &\text{in} \: \{x_n\leqslant -C_2\}.
	    \end{cases}
	    \end{align*}
    	We could then apply the monotonicity Theorem~\ref{prop: monotonicity} to obtain that $v(x)$ is monotone increasing in the $x_n$ direction in $\{x_1>-C_1, x_n>-C_2\}$. However, $v(x)$ has a maximum point at $0$  with $v(0)=1$. Thus we obtain a contradiction.
	
	    \item [Case 5:] $\lim\limits_{k\rightarrow +\infty}\frac{d'_k}{\lambda_k}\rightarrow +\infty$, $\lim\limits_{k\rightarrow +\infty}\frac{d^+_k}{\lambda_k}\rightarrow 0.$
	
		In this case, we follow the argument as in the proof of Theorem 1 in ~\cite{che-li-li} to obtain a contradiction. For the reader's convenience, we include the details here. Now there exists a point $x^0\in \partial^+\Omega$ such that, up to a subsequence, $x^k\rightarrow x^0$ as $k\rightarrow +\infty$. Let $p^k:=\frac{x^0-x^k}{\lambda_k}$. Then we have $$p^k\rightarrow 0$$ as $k\rightarrow +\infty$ and $$v_k(p^k)=0.$$ We are going to show that $v_k$ is uniformly H\"older continuous near $p^k$, i.e.
		\begin{equation}\label{eq: holder}
		|v_k(x)-v_k(p^k)|\leqslant C|x-p^k|^{\frac{\alpha}{2}},
		\end{equation}
		for some constant $C>0$. Once we have \eqref{eq: holder}, we would have
		$$1=v_k(0)-v_k(p^k)\rightarrow 0$$ as $k\rightarrow +\infty$, which gives a contradiction. Now we only have to prove \eqref{eq: holder}. In order to do this, we need to construct some auxiliary function. Since $\partial \Omega$ is $C^2$, for $k$ large enough, there is a unit ball contained in $\Omega_k^c$ that is tangent to $\partial \Omega_k$ at $p^k$. Without losing generality we assume the unit ball is centred at the origin. Let $\psi_1(x)=C(1-|x|^2)^{\frac{\alpha}{2}}$ for $x\in B_1(0)$, and $\psi_1(x)=0$ for $x\in B_1^c(0)$ where $C>0$ is a normalization constant such that
		$$(-\Delta)^{\frac{\alpha}{2}}\psi_1(x)=1.$$
		Let $$\psi_2(x)=\frac{1}{|x|^{n-\alpha}}\psi_1\(\frac{x}{|x|^2}\)$$
		be the Kelvin transformation of $\psi_1(x)$. Then
		$$(-\Delta)^{\frac{\alpha}{2}}\psi_2(x)=\frac{1}{|x|^{n+\alpha}}, \: x\in B_1^c(0).$$
		Moreover, we choose $\xi(x)$ to be a smooth cut-off function such that $0\leqslant \xi(x) \leqslant 1$ in $\R^n$, $\xi(x)=0$ in $B_1(0)$ and $\xi(x)=1$ in $B_3^c(0)$. It is easy to check that
		$$(-\Delta)^{\frac{\alpha}{2}}\xi_1(x)\geqslant -C$$
		for some constant $C>0$. Now we let
		$$\varphi(x)=t\psi_2(x)+\xi(x), \: t>0.$$
		In the region $D:=(B_3(0)\backslash B_1(0))\cap \Omega_k$ we have for $t$ large enough
		$$(-\Delta)^{\frac{\alpha}{2}}\varphi_1(x)\geqslant \frac{t}{|x|^{n+\alpha}}-C\geqslant 1.$$
		Therefore we have
		\begin{align*}
		\begin{cases}
		(-\Delta)^{\frac{\alpha}{2}}(\varphi-v_k)\geqslant 0 \: &x\in D, \\
		\varphi-v_k>0 \: & x\in D^c.
		\end{cases}
		\end{align*}
		From maximum principle (see \cite{che-li-li2}) we see that
		$$\varphi\geqslant v_k, \: x\in D.$$
		Since $$\psi_2(x)-\psi_2(p^k)=\frac{1}{|x|^n}(|x|-1)^{\frac{\alpha}{2}}(|x|+1)^{\frac{\alpha}{2}}\leqslant C(|x|-1)^{\frac{\alpha}{2}},$$
		for $x\in D$ and $\xi(x)$ is smooth, we see that $\varphi(x)$ is H\"older continuous in $D$. Hence
		$$0\leqslant v_k(x)-v_k(p^k)=v_k(x)\leqslant \varphi(x)\leqslant \varphi(x)-\varphi(p^k)\leqslant C|x-p^k|^{\frac{\alpha}{2}}.$$
		This proves \eqref{eq: holder} and ends the proof of Case 4.

	    \item [Case 6:] $\lim\limits_{k\rightarrow +\infty}\frac{d'_k}{\lambda_k}\rightarrow C>0$, $\lim\limits_{k\rightarrow +\infty}\frac{d^+_k}{\lambda_k}\rightarrow 0.$
	
	    \item [Case 7:] $\lim\limits_{k\rightarrow +\infty}\frac{d'_k}{\lambda_k}\rightarrow 0$, $\lim\limits_{k\rightarrow +\infty}\frac{d^+_k}{\lambda_k}\rightarrow +\infty.$
	
	    \item [Case 8:] $\lim\limits_{k\rightarrow +\infty}\frac{d'_k}{\lambda_k}\rightarrow 0$, $\lim\limits_{k\rightarrow +\infty}\frac{d^+_k}{\lambda_k}\rightarrow C>0.$
	
	    \item [Case 9:] $\lim\limits_{k\rightarrow +\infty}\frac{d'_k}{\lambda_k}\rightarrow 0$, $\lim\limits_{k\rightarrow +\infty}\frac{d^+_k}{\lambda_k}\rightarrow 0.$

	    We could proceed as in Case 5 to obtain contradictions for Cases 6-9 with some obvious modifications. This ends the proof of Theorem~\ref{thm: apriori}.
	\end{enumerate}

\end{proof}

\bigskip\noindent
{\bf{Acknowledgement.}} The authors would like to thank Prof. Congming Li for his fruitful discussions on this subject.

\bigskip\noindent

\textsc{Chenkai Liu\\
	School of Mathematical Sciences,\\
	Shanghai Jiao Tong University\\
	Shanghai, 200240, People’s Republic of China} \\
e-mail: {\bf{Lck0427@sjtu.edu.cn}}\\

\textsc{Shaodong Wang\\
	School of Mathematics and Statistics,\\
	Nanjing University of Science and Technology\\
	Nanjing, 210094, People’s Republic of China} \\
e-mail: {\bf{shaodong.wang@mail.mcgill.ca}}\\

\textsc{Ran Zhuo\\
	Mathematics and Science College,\\
	Shanghai Normal University\\
	Shanghai, 200240, People’s Republic of China} \\
AND\\
 {Department of Mathematical and Statistics,\\
	Huanghuai University\\
	Zhumadian, 463000, People’s Republic of China} \\
e-mail: {\bf{zhuoran1986@126.com}}

	\end{document}